\newtheorem{theorem}{Theorem}[section]
\newtheorem{lemma}[theorem]{Lemma}
\newtheorem{proposition}[theorem]{Proposition}
\newtheorem{corollary}[theorem]{Corollary}
\newtheorem{preexample}{Example}[section]
\newenvironment{example}{\begin{preexample}}{\end{preexample}}
\newtheorem{preremark}{Remark}
\newenvironment{remark}{\begin{preremark}\rm}{\end{preremark}}
\newenvironment{proof}
{{\bf Proof:}}
{\qquad \hspace*{\fill} $\Box$}%
\newcommand{\fa}{\mathfrak{a}}%
\newcommand{\fg}{\mathfrak{g}}%
\newcommand{\fh}{\mathfrak{h}}%
\newcommand{\LC}{\mathcal{L}}%
\newcommand{\DC}{\mathcal{D}}%
\newcommand{\XC}{\mathcal{X}}%
\newcommand{\AC}{\mathcal{A}}%
\newcommand{\R}{\mathbb{R}}%
\title{Solution Curve for Linear Control Systems on Lie Groups}
\author{João Paulo Lima de Oliveira \and Alexandre J. Santana\\
Departamento de Matemática, Universidade Estadual de Maringá\\
Maringá, Brazil \and Simão N. Stelmastchuk\\
Universidade Federal do Paraná, Jandaia do Sul, Brazil}
\begin{document}
	
	\maketitle
	
	\begin{abstract}
		\noindent\hspace*{1.5cm} The purpose of this paper is to describe explicitly the solution for linear control systems on Lie groups. In case of linear control systems with inner derivations, the solution is given basically by the product of the exponential of the associated invariant  system and the exponential of the associated invariant drift field. We present the solutions in  low dimensional cases and apply  the results to obtain some controllability results.
	\end{abstract}
	{\bf AMS 2010 subject classification}: 93B05, 93C25, 34H05.\\
	{\bf Key words:} linear control system, solutions, controllability.
	
\section{Introduction}
Linear control system on $\R^n$ are control system given by differential equation
\begin{equation}\label{linearonR}
	\dot{x} = Ax + Bu, \ \ x \in \R^n,
\end{equation}
where $A$ is a $n\times n$-matrix, B is a $m \times n$, and $u = (u_1, u_2, \ldots, u_m)$ is an admissible control.  Markus in \cite{markus} studied this systems in case of  matrix groups and, later, Ayala and Tirao \cite{ayalatirao} extended for general Lie groups. Recall that  a linear control system on connected Lie group $G$ is a control system  given by the  differential equation 
\begin{equation}\label{linearonG}
	\dot{g} = \XC(g) + \sum_{i=1}^{m} u_i(t) X_i(g), \ \  g \in G,
\end{equation}
where $\XC$ is a linear vector field, namely, its flow $\varphi_t$ is a family of automorphisms of $G$, $X_1, \ldots, X_m$ are invariant vector fields, and $ u =(u_1, \ldots, u_n)$ is an admissible control. 
	
Despite linear control systems (\ref{linearonR}) have a good description of their solutions (see for instance (\cite{agrachev}), it is not true in case of systems on Lie groups (2). The first results in this way is found in \cite{ayalatirao}. Our purpose is to contribute in this direction  describing the solutions of linear control flow $\phi_t(u,.)$. In fact, we construct the solution of the linear control system (\ref{linearonG}) using a technique of Cardetti and Mittenhuber \cite{Cardetti}, that is, considering an invariant system on a semi-direct product $G\times_\varphi\R$. Then the solution curve is obtained as the integral curve of a certain invariant vector field. We also show that if the derivation $\DC$, associated to the linear vector field $\XC$,  is inner then the solution of linear control system has a simpler description (see Theorem \ref{solsemsimpgroup}). 
	
 In current literature, we can note that there are not general results on controllability problems of linear control system on Lie groups (see e.g.  \cite{ayalatirao}, \cite{Cardetti},  \cite{dasilva}, \cite{Jouan}, \cite{markus}, \cite{sanmartin}). Then with the description of the solutions we can obtain some results on  controllability.

 The paper is organized as follows, in the second section we establish some basic facts about linear controls system. In the third section we construct the solution of linear control system. In fourth section, we study controllability of linear control system on a subgroup $H$ of $G$. Finally, in the last section we study solutions of linear control system (\ref{linearonG}) when the derivation is inner, and, as application of this section, we can construct solutions on matrix groups $GL(n,\R)^+$ and on all 3-dimensional, semisimple Lie groups: $SL(2,\R)$, $SU(2)$, $SO(3,\R)$, and $SO(2,1)$.

\section{Linear Vector Fields}
This section is intended to recall some necessary facts about linear vector fields and linear systems on Lie groups that will be useful along the paper (see \cite{Jouan} for more details). Let $G$ be a connected Lie group with Lie algebra $\mathfrak{g}$. Throughout this paper, $\mathfrak{g}$ is the set of right invariant vector fields. For every $g\in G$, the maps $R_g,L_g\colon G\to G$ are, respectively, the right and left translations on $G$.

A vector field $\XC$ on $G$ is said to be \textit{linear} if its flow, which is denoted by $\varphi_t$, is an one-parameter group of automorphisms.  In \cite{Jouan} it is showed that a linear vector field $\XC$ can be characterized by one of the following conditions:
\begin{description}
	\item {(i)} for all $t \in \R$, $\varphi_t$ is an automorphism of $G$;
	\item{(ii)} for all $Y\in\fg$, $[\XC,Y]\in\fg$ and $\XC(e)=0$, where $e$ is identity of $G$.
	\item{(iii)} for all $g,h \in G$, $\XC(gh) = d(R_h)_g\XC(g) + d(L_g)_h\XC(h)$.
\end{description} 

For any linear vector field $\XC$ it is possible to associate a derivation $\DC\colon\fg\to\fg$ defined by $\DC(Y)=-[{\cal X},Y]$. From a derivation $\DC$ and a liner flow $\varphi$ given by $\XC$ we can see that 
\[
	d(\varphi_t)_e=e^{t\DC}\mbox{\hspace{1cm}and\hspace{1cm}}\varphi_t(\exp(Y))=\exp(e^{t\DC}Y).
\]
A particular case of derivations is inner derivation, that is, $\DC=-ad(X)$ for some $X\in\fg$. In this case, the linear vector field $\XC$ can be decomposed as $\XC=X+dIX$, where $dIX$ is the left-invariant vector field induced by inverse map $I(g)=g^{-1}$.

A linear system on $G$ is a control system of the form
\[
	\Sigma_L\colon\displaystyle\frac{dg}{dt}={\cal X}(g)+\displaystyle\sum_{j=1}^{m} u_{j}Y_j(g),
\]
where ${\cal X}$ is a linear vector field, $Y_1,\ldots, Y_m$ are right-invariant vector fields and the control functions $u\colon\R\to U\subset\R^m$ belong to a subset ${\cal U}\subset L_{loc}^{\infty}(\R;\R^m)$ of the space of the locally integrable functions. Without loss of generality we can assume, throughout this paper, that  ${\cal U}$ is the set of piece-wise constant functions.

For $u\in{\cal U}$ and $g\in G$, we denote the solution of $\Sigma_L$ starting at $g$ associated to $u$ by $\phi_t(u,g)$ with $t\in\R$. For $g,h\in G$, we say that $h$ is reachable from $g$ in time $t$ if there is $u$ such that $\phi_t(u,g)=h$. Let us denote by ${\cal A}_t(g)$ the set of all points in $G$ reachable from $g$ in time $t$. The reachable set from $g$ is defined as follows
\[
	{\cal A}(g)=\bigcup_{t\geq0}{\cal A}_t(g).
\]
	
The system $\Sigma_L$ is said to be controllable if any $h$ is reachable for any $g$. Equivalently, ${\cal A}(g)=G$ for any $g \in G$.

\section{Solution for Linear Control Systems}
In this section we come up with a construction presented in \cite{Cardetti} which associates to a linear control system $\Sigma_L$ an invariant one, denoted by $\Sigma_I$. We begin by remembering that the flow $\varphi_t$ of the linear vector field ${\cal X}$ yields a representation $\varphi\colon\R\rightarrow Aut(G)$.  This allows us to define the semi-direct product $G\times_{\varphi}\R$, that is, the set $G\times\R$ endowed with the product $(g,t)(h,s)=(g\varphi_{t}(h),t+s)$ (see e.g. \cite{San Martin}). It is well-know that $G\times_{\varphi}\R$ is a Lie group. Furthermore, its correspondent Lie algebra is the semi-direct product of Lie algebras $\mathfrak{g}\times_{\sigma}\R$, where $\sigma\colon\R\rightarrow Der(\mathfrak{g})$ is defined as
\[
	\sigma(t)(Y)=ad_{t{\cal X}}(Y)=[t\XC,Y].
\]
The relation between $\varphi$ and $\sigma$ is given by $d\varphi_0=\sigma$.

For any vector fields $(Y,t),(W,s) \in \fg \times_{\sigma}\R$, the Lie bracket is given by the formula
\[
	\left[(Y,t)(W,s)\right]=\left([Y+t{\cal X},W+s{\cal X}],0\right).
\]
Throughout this paper, for $(g,r)\in G\times_{\varphi}\R$, $R_{(g,r)}$ we denote the right translation and, when not specified, the differential $dR_{(g,r)}$ is evaluated at the group identity.

Now we determine the value of a vector field $(W,s)$ on an arbitrary point $(g,r)$.
\begin{proposition}\label{Efcamp}
	If $(W,s)\in\mathfrak{g}\times_\sigma\R$ and $(g,r)\in G\times_\varphi\R$, then 
	\[
		(W,s)(g,r)=\left(W(g)+s{\cal X}(g),s\right).
	\]
\end{proposition}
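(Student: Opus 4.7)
The plan is to unpack the meaning of the right-invariant vector field $(W,s)$ on $G\times_\varphi\R$ evaluated at a point $(g,r)$. By definition of right-invariance on the semi-direct product we have
\[
 (W,s)(g,r) = dR_{(g,r)}\big|_{(e,0)}(W(e),s),
\]
where on the right hand side $(W(e),s) \in T_{(e,0)}(G\times_\varphi\R) \cong \fg\times_\sigma\R$. So the whole task reduces to differentiating the right translation $R_{(g,r)}$ at the identity $(e,0)$.

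First I would write out $R_{(g,r)}(h,t) = (h,t)(g,r) = (h\varphi_t(g),\,t+r)$ using the semi-direct product structure. Then, picking any smooth curve $\tau\mapsto(h(\tau),t(\tau))$ in $G\times_\varphi\R$ with $(h(0),t(0))=(e,0)$ and initial velocity $(W(e),s)$, I would differentiate each component at $\tau=0$. The second component is trivial: $\frac{d}{d\tau}\big|_0(t(\tau)+r) = s$, giving the second entry of the claimed formula.

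For the first component I would apply the Leibniz rule for the group product of $G$:
\[
 \frac{d}{d\tau}\Big|_0 h(\tau)\varphi_{t(\tau)}(g) = dR_{\varphi_0(g)}\big(W(e)\big) + dL_{h(0)}\!\left(\frac{d}{d\tau}\Big|_0 \varphi_{t(\tau)}(g)\right).
\]
Since $\varphi_0=\id$ and $h(0)=e$, the first term is $dR_g(W(e))$, which equals $W(g)$ by right-invariance of $W$ on $G$; and the second term reduces to $s\cdot\frac{d}{du}\big|_0\varphi_u(g) = s\,\XC(g)$, because $\XC$ is by definition the infinitesimal generator of the flow $\varphi$. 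Assembling the two components gives $(W(g)+s\XC(g),\,s)$, as claimed.

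The only mildly delicate point is notational: the symbol $(W,s)$ is being used both for an abstract element of $\fg\times_\sigma\R$ and for the corresponding right-invariant vector field on $G\times_\varphi\R$, so one must keep straight that the value $W(e)\in\fg$ is what plays the role of the tangent vector at $(e,0)$. Beyond this bookkeeping, the computation is a direct application of the product rule in a Lie group together with right-invariance of $W$ and the defining property of $\XC$ as the generator of $\varphi_t$; no serious obstacle arises.
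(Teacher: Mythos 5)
Your proof is correct and follows essentially the same route as the paper: both compute $(W,s)(g,r)$ as the differential of the right translation $R_{(g,r)}$ at the identity of $G\times_\varphi\R$. The paper simply asserts the matrix form $\left(\begin{smallmatrix} dR_g & \XC(g)\\ 0 & 1\end{smallmatrix}\right)$ for $dR_{(g,r)}$, whereas you actually derive its entries by differentiating $(h,t)\mapsto(h\varphi_t(g),t+r)$ via the Leibniz rule, right-invariance of $W$, and the fact that $\XC$ generates $\varphi_t$ — a welcome filling-in of the details.
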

\begin{proof}
	We first write $(W,s)(g,r)=dR_{(g,r)}(W,s)$. Thus, in matrix notation, the differential $dR_{(g,r)}$ gives
	\begin{eqnarray*}
		dR_{(g,r)}(W,s)=\left(
		\begin{array}{cc}
			dR_g&\XC(g)\\
			0&1				
		\end{array}\right)
		\left(
		\begin{array}{cc}
			W\\
			s
		\end{array}\right)=\left(W(g)+s{\cal X}(g),s\right).
	\end{eqnarray*}
\end{proof}

This Proposition allow us to describe exponentials of invariant vector fields of $\mathfrak{g}\times_{\sigma}\R$.

\begin{lemma}
	If $(W,0)$, $(0,s)\in\mathfrak{g}\times_{\sigma}\R$, then their exponentials are smooth curves $(\exp(tW),0)$ and $(e,st)$, respectively, with $t\in\R$.
\end{lemma}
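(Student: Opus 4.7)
The plan is to verify both assertions directly from the definition of the Lie-group exponential: for an invariant vector field $V$ on $G\times_{\varphi}\R$, $\exp(tV)$ is the unique integral curve of $V$ starting at the identity $(e,0)$. So for each of the two vector fields $(W,0)$ and $(0,s)$, I will propose the claimed curve, differentiate it, and compare with the value of the vector field at the curve point using Proposition \ref{Efcamp}.

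For $(W,0)$, I would set $\gamma(t)=(\exp(tW),0)$. Clearly $\gamma(0)=(e,0)$ and $\dot\gamma(t)=(W(\exp(tW)),0)$ by definition of the exponential on $G$. On the other hand, Proposition \ref{Efcamp} applied at the point $(\exp(tW),0)$ gives
\[
(W,0)(\exp(tW),0)=\bigl(W(\exp(tW))+0\cdot\XC(\exp(tW)),0\bigr)=\bigl(W(\exp(tW)),0\bigr),
\]
which matches $\dot\gamma(t)$. By uniqueness of integral curves, $\exp(t(W,0))=(\exp(tW),0)$.

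For $(0,s)$, I would take $\gamma(t)=(e,st)$, so $\gamma(0)=(e,0)$ and $\dot\gamma(t)=(0,s)$. Applying Proposition \ref{Efcamp} at $(e,st)$ and using the defining property $\XC(e)=0$ of a linear vector field (condition (ii) from Section 2) yields
\[
(0,s)(e,st)=\bigl(0(e)+s\XC(e),s\bigr)=(0,s),
\]
which again agrees with $\dot\gamma(t)$. Hence $\exp(t(0,s))=(e,st)$.

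There is no real obstacle here; the only subtlety worth flagging is that the computation for the second curve relies on $\XC(e)=0$, without which the second coordinate would force the first to be a non-trivial curve in $G$. Everything else is an immediate application of the proposition that has just been proved and uniqueness of solutions to the defining ODE of the exponential.
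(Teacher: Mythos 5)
Your proof is correct and follows essentially the same route as the paper: propose the candidate curve, compute its derivative, match it against the value of the vector field given by Proposition \ref{Efcamp}, and invoke uniqueness of integral curves. The only difference is that you write out the second case explicitly (where the paper just says ``analogously''), and your observation that this case rests on $\XC(e)=0$ is a worthwhile point the paper leaves implicit.
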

\begin{proof}
	We first compute the exponential for $(W,0)$. To this purpose, we note that $(W,0)(g,r)=(W(g),0)$ for all $(g,r)\in G\times_{\varphi}\R$ in view of Proposition \ref{Efcamp}. By definition of exponential, 
	\[
		\dfrac{d}{dt}(\exp(tW),0)=\left(\dfrac{d}{dt}\exp(tW),0\right)=\left(W(\exp(tW)),0\right)=(W,0)(\exp(tW),0).
	\]
	The result follows by uniqueness of solution. Analogously we can see that the curve $(e,st)$ is the exponential of $(0,s)$. 
\end{proof}

In the following, the previous Lemma will be used to determine the exponential of an invariant vector field $(W,s) \in \fg \times_{\sigma}\R$.

\begin{proposition}\label{FormExp}
	Let $(W,s)$ be an invariant vector field on $G \times_{\sigma} \R$. It follows that
	\begin{equation}\label{formexp}
		\exp(t(W,s))=\left(\displaystyle\lim_{n\rightarrow\infty}\displaystyle\prod_{i=0}^{n-1}\varphi_{ist/n}\circ\exp(t/n\cdot W),st\right).
	\end{equation}
\end{proposition}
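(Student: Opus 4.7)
The plan is to view $(W,s) = (W,0) + (0,s)$ as a sum in the Lie algebra $\fg \times_\sigma \R$ and apply the Trotter product formula
\[
	\exp(X+Y) = \lim_{n\to\infty}\left(\exp(X/n)\exp(Y/n)\right)^{n},
\]
which holds in any Lie group. I would take $X = (tW,0)$ and $Y = (0,ts)$ so that $X+Y = t(W,s)$, and invoke the previous lemma to identify
\[
	\exp(X/n) = (\exp(tW/n),0),\qquad \exp(Y/n) = (e, ts/n).
\]

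Next I would compute a single factor of the Trotter product using the semi-direct product law $(g,t)(h,s) = (g\varphi_t(h), t+s)$: since $\varphi_0 = \id$,
\[
	\exp(X/n)\exp(Y/n) = (\exp(tW/n),0)(e,ts/n) = (\exp(tW/n),\, ts/n).
\]
Writing $a := (\exp(tW/n), ts/n)$, the real work is to show by induction on $k$ that
\[
	a^{k} = \Bigl(\prod_{i=0}^{k-1}\varphi_{ist/n}(\exp(tW/n)),\ kst/n\Bigr),
\]
which is immediate from the semi-direct product law: multiplying on the right by $a$ shifts the next copy of $\exp(tW/n)$ through $\varphi_{kst/n}$ while adding $ts/n$ to the second coordinate. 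Setting $k = n$ gives
\[
	(\exp(X/n)\exp(Y/n))^{n} = \Bigl(\prod_{i=0}^{n-1}\varphi_{ist/n}\circ\exp(t/n\cdot W),\, st\Bigr).
\]

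Finally I would pass to the limit as $n \to \infty$, using Trotter to conclude that the left-hand side converges to $\exp(t(W,s))$. Since the second coordinate is constantly $st$, convergence of the whole expression is equivalent to convergence of the first coordinate, yielding the claimed formula (\ref{formexp}). I do not expect a serious obstacle; the only subtle point is ensuring that Trotter's formula is being applied in the correct form on the semi-direct product group $G \times_\varphi \R$, but this is a standard fact valid in any (finite-dimensional) Lie group, and the inductive computation of $a^n$ is routine.
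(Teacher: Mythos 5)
Your proposal is correct and follows essentially the same route as the paper: decompose $(W,s)=(W,0)+(0,s)$, apply the Lie (Trotter) product formula, identify the factor exponentials via the preceding lemma, and expand the $n$-th power using the semi-direct product law. The only difference is that you spell out the induction computing $a^k$ explicitly, which the paper leaves implicit.
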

\begin{proof}
	We first write $(W,s)=(W,0)+(0,s)$. Applying the Lie product formula gives
	\begin{eqnarray*}
	\exp(t(W,s))&=&\displaystyle\lim_{n\rightarrow\infty}\left(\exp(t/n(W,0))\cdot\exp(t/n(0,s))\right)^n.
	\end{eqnarray*}
	Using the above Lemma and the semidirect product we see that
	\begin{eqnarray*}
		\exp(t(W,s))
		&=&\displaystyle\lim_{n\rightarrow\infty}\left((\exp(t/n\cdot W),0)(e,st/n)\right)^n\\
		&=&\displaystyle\lim_{n\rightarrow\infty}(\exp(t/n\cdot W),st/n)^n\\
		&=&\left(\displaystyle\lim_{n\rightarrow\infty}\displaystyle\prod_{i=0}^{n-1}\varphi_{ist/n}\circ\exp(t/n\cdot W),st\right),
	\end{eqnarray*}
	and the proof is complete.
\end{proof}

Using the relation $d(\varphi_t)_e=e^{t\DC}$ we can rewrite formula (\ref{formexp}) as 
\[
	\exp(t(W,s))=\left(\displaystyle\lim_{n\rightarrow\infty}\displaystyle\prod_{i=0}^{n-1}\exp\left(t/n\cdot e^{\DC_t} W\right),st\right),
\]
where, for simplification, we denote $\DC_t=\dfrac{ist}{n}\DC$.

Consider the vector fields $\bar{{\cal X}}=(0,1)$, $\bar{Y_j}=(Y_j,0) \in \fg \times_{\sigma}\R$, for each $j=1,\ldots,m$. It follows from Proposition \ref{Efcamp} that these vector fields can be expressed as $\bar{{\cal X}}(g,r)=({\cal X}(g),0)$ and $\bar{Y}_j(g,r)=(Y_j(g),0)$. We define the following invariant control system on $G\times_{\varphi}\R$:
\[
	\Sigma_I\colon\displaystyle\frac{d(g,r)}{dt}=\bar{{\cal X}}(g,r)+\displaystyle\sum_{j=1}^{m} u_{j}\bar{Y_j}(g,r).
\]
Equivalently, we have
\[
	\left(\begin{array}{c}
		dg/dt\\
		dr/dt
	\end{array}\right)=
	\left(\begin{array}{c}
		{\cal X}(g)+\displaystyle\sum_{j=1}^{m} u_{j}Y_j(g)\\
		1
	\end{array}\right).
\]
This system was built in such a way that $\pi(\Sigma_I)=\Sigma_L$, where $\pi\colon G\times_{\varphi}\R\to G$ is the projection on the first coordinate. If we denote ${\cal A}_I(g,r))$  the reachable set of $\Sigma_I$ at point $(g,r)$ and ${\cal A}_L(g)$ the reachable set of $\Sigma_L$ at point $g$, then $\pi({\cal A}_I(g,r))={\cal A}_L(g)$. Furthermore, the invariance of the system allows us to write ${\cal A}_I(g,r)={\cal A}_I(e,0)\cdot(g,r)$. And now, we move on to the main result.

\begin{theorem}\label{Formgersol}
	 For $u=(u_1,\ldots,u_m)\in\R^m$ the curve 
	\begin{equation}\label{sollysys}
		\phi_t(u,e)=\displaystyle\lim_{n\rightarrow\infty}\displaystyle\prod_{i=0}^{n-1}\varphi_{it/n}\circ \exp\left(\dfrac{t}{n}\displaystyle\sum_{j=1}^{m}u_jY_j\right)
	\end{equation}
	is the solution, with initial condition $\phi_0(u,e) = e$, of the dynamical system
	\[
		\Sigma_L\colon\displaystyle\frac{dg}{dt}={\cal X}(g)+\displaystyle\sum_{j=1}^{m} u_{j}Y_j(g).
	\]
\end{theorem}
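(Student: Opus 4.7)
The plan is to reduce the assertion to Proposition \ref{FormExp} by lifting the problem to the invariant system $\Sigma_I$ on $G \times_{\varphi} \R$ constructed immediately before the theorem.

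First, I fix the constant control vector $u = (u_1,\ldots,u_m) \in \R^m$ and set $W = \sum_{j=1}^m u_j Y_j \in \fg$. For this control, the right-hand side of $\Sigma_I$ is $\bar{{\cal X}} + \sum_j u_j \bar{Y}_j = (W,1) \in \fg \times_{\sigma} \R$. By Proposition \ref{Efcamp}, evaluated at an arbitrary point $(g,r)$ this field equals $({\cal X}(g) + W(g), 1)$, so $\Sigma_I$ with constant control $u$ is exactly the right-invariant ODE $\dot{(g,r)} = (W,1)(g,r)$ on $G \times_{\varphi} \R$.

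Second, since $(W,1)$ is a right-invariant vector field on $G \times_{\varphi} \R$, the solution of $\Sigma_I$ starting at the identity $(e,0)$ is the one-parameter subgroup $t \mapsto \exp(t(W,1))$. Applying Proposition \ref{FormExp} with $s=1$ gives the explicit expression
\[
\exp(t(W,1)) = \left(\lim_{n \to \infty} \prod_{i=0}^{n-1} \varphi_{it/n} \circ \exp\!\left(\tfrac{t}{n}\, W\right),\; t\right).
\]
To finish, I project through $\pi \colon G \times_\varphi \R \to G$. The relation $\pi(\Sigma_I) = \Sigma_L$ noted before the theorem, together with $\pi(e,0) = e$, implies that $\pi \circ \exp(t(W,1))$ solves $\Sigma_L$ with initial condition $e$, and this first coordinate is precisely the curve in (\ref{sollysys}).

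The essential work is already absorbed into Proposition \ref{FormExp}; the only step requiring attention is the identification, for constant $u$, of the right-hand side of $\Sigma_I$ with the single right-invariant field $(W,1)$, after which the claim is immediate. This is less a genuine obstacle than a bookkeeping check in the semidirect-product notation. The extension to piecewise-constant controls, which is the class the paper works with, follows by concatenating the formula on consecutive subintervals where $u$ is constant.
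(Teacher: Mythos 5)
Your proposal is correct and follows essentially the same route as the paper: lift to the semidirect product $G\times_\varphi\R$, identify the constant-control system $\Sigma_I$ with the right-invariant field $(W,1)$ via Proposition \ref{Efcamp}, apply Proposition \ref{FormExp} to $\exp(t(W,1))$, and project to $G$. The paper merely phrases the last step as a direct verification that the first coordinate of $\exp(t(W,1))$ satisfies the ODE, which is the same content as your appeal to $\pi(\Sigma_I)=\Sigma_L$.
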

\begin{proof}
	Let us denote $W=\displaystyle\sum_{j=1}^{m}u_jY_j$ and $\exp(t(W,1))=(\phi_t(u,e),t)$. From Proposition \ref{Efcamp} we have that
	\[
		(W,1)(\phi_t(u,e),t)=(W(\phi_t(u,e))+{\cal X}(\phi_t(u,e)),1)=\left({\cal X}(\phi_t(u,e))+\displaystyle\sum_{j=1}^{m} u_{j}Y_j(\phi_t(u,e)),1\right).
	\]
	On the other side, the curve $(\phi_t(u,e),t)$ is the integral curve of $W$. Therefore  
	\[
		(W,1)\exp(t(W,1))=(W,1)(\phi_t(u,e),t)=(\frac{d\phi_t(u,e)}{dt},1).
	\]
	 We thus get 
	\[
		\left(\begin{array}{c}
			\frac{d\phi_t}{dt}(u,e)\\
			1
		\end{array}\right)=
		\left(\begin{array}{c}
			{\cal X}(\phi_t(u,e))+\displaystyle\sum_{j=1}^{m} u_{j}Y_j(\phi_t(u,e))\\
			1
		\end{array}\right).
	\]
	Taking the projection on the first coordinate we conclude that the curve $\phi_t(u,e)$ satisfies the differential equation of the dynamical system. As $\phi_0(u,e)=e$ we have that this is the solution of the system at the identity. On the other side, Proposition \ref{FormExp} give us a description of $\exp(t(W,1))$. Since $\exp(t(W,1))=(\phi_t(u,e),t)$, we conclude that  
	\[
		\phi_t(u,e) =\displaystyle\lim_{n\rightarrow\infty}\displaystyle\prod_{i=0}^{n-1}\varphi_{it/n}\circ \exp\left(\dfrac{t}{n}\displaystyle\sum_{j=1}^{m}u_jY_j\right).
	\]
\end{proof}

The above theorem allow us to recover a well-know result about linear systems. 

\begin{corollary}\label{Solarbpoint}
	Let $g\in G$ be an arbitrary point and consider the linear dynamical system as in the previous theorem. The solution $\phi_t(u,g)$ of the system starting at $g$ is given by the formula 
	\[
		\phi_t(u,g) = \phi_t(u,e) \varphi_t(g).
	\]
\end{corollary}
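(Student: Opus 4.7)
The plan is to leverage the invariant lift $\Sigma_I$ on $G \times_\varphi \R$ together with its right-invariance. Let $\psi_t(u,\cdot)$ denote the flow of $\Sigma_I$. Since $\bar{\mathcal{X}}$ and each $\bar{Y}_j$ are right-invariant vector fields on the Lie group $G \times_\varphi \R$, the flow $\psi_t(u,\cdot)$ commutes with right translations in the standard way, so
\[
\psi_t(u,(g,r)) = \psi_t(u,(e,0)) \cdot (g,r).
\]
From the proof of Theorem \ref{Formgersol} we already know that $\psi_t(u,(e,0)) = (\phi_t(u,e), t)$, since $\exp(t(W,1))$ was identified with $(\phi_t(u,e), t)$.

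Next I would specialize to $(g,r) = (g,0)$ and just multiply in the semi-direct product, using the formula $(a,t)(b,s) = (a\,\varphi_t(b), t+s)$ recalled at the start of Section 3:
\[
\psi_t(u,(g,0)) = (\phi_t(u,e), t)\cdot(g,0) = \bigl(\phi_t(u,e)\,\varphi_t(g),\, t\bigr).
\]
Applying the projection $\pi\colon G\times_\varphi\R\to G$ on the first coordinate, and invoking the identity $\pi(\Sigma_I) = \Sigma_L$ established earlier (so that $\pi$ carries solutions of $\Sigma_I$ starting at $(g,0)$ to solutions of $\Sigma_L$ starting at $g$), one obtains exactly $\phi_t(u,g) = \phi_t(u,e)\,\varphi_t(g)$.

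As a sanity check, one can also verify the formula directly: differentiating $h(t) := \phi_t(u,e)\,\varphi_t(g)$ by the Leibniz rule gives $h'(t) = dR_{\varphi_t(g)}\phi_t'(u,e) + dL_{\phi_t(u,e)}\mathcal{X}(\varphi_t(g))$; the $\mathcal{X}$-terms combine via characterization (iii) of linear vector fields into $\mathcal{X}(h(t))$, while the right-invariance of each $Y_j$ turns $dR_{\varphi_t(g)}Y_j(\phi_t(u,e))$ into $Y_j(h(t))$, so $h$ solves $\Sigma_L$ with $h(0)=g$. The only mildly delicate point is making sure the right-invariance of the lifted flow is applied correctly in the semi-direct product, but since $\Sigma_I$ is by construction an invariant system on a Lie group, this step is standard and poses no real obstacle.
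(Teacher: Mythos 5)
Your proof is correct and takes essentially the same route as the paper's: lift to the invariant system $\Sigma_I$ on $G\times_\varphi\R$, use right-invariance to write the solution through $(g,0)$ as $\psi_t(u,(e,0))\cdot(g,0)$, compute the semidirect product, and project via $\pi$. The direct-differentiation sanity check at the end is a nice independent verification but is not part of the paper's argument and is not needed.
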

\begin{proof}
	Consider an arbitrary point $(g,r)\in G\times_{\varphi}\R$. Denote $\phi_I((g,r),t)$ the solution of control system $\Sigma_I$ at point $(g,r)$. We have already seen that $\phi_L(g,t)=\pi(\phi_I((g,r),t))$. Now, using the right invariance property and the semidirect product we get
	\[
		\phi_t(u,g)=\pi\left(\phi_I((e,0),t)(g,r)\right).
	\]
	By the proof of the previous theorem, it follows that $\phi_I((e,0),t)=(\phi_t(u,e),t)$. Therefore 
	\[
		\phi_t(u,g)=\pi(\phi_t(u,e)\varphi_t(g),t+r) = \phi_t(u,e)\varphi_t(g) .
	\]

\end{proof}
	
\begin{remark}\label{solarbtime}
	It is possible to apply the previous theorem (and its corollary) to obtain the solution curve for linear control systems by means of the cocycle property. In fact, without loss of generality, consider a piece-wise constant control $u\colon[0,t+s]\to U\subset\R^m$, with $t,s\in\R$, given by concatenation 
	\[
		u(r) = \left\{
		\begin{array}{ll}
			u_1 &, \mbox{if } \,\, r \in [0,t]\\
			u_2 &, \mbox{if } \,\, r \in (t, t+s],
		\end{array}
		\right.
	\]
	where $u_1$ and $u_2$ are constants. Now, using Corollary \ref{Solarbpoint} and cocycle property we get
	\[
		\phi_{t+s}(u,e)=\phi_s(u_2,e)\varphi_s(\phi_t(u_1,e)).
	\]
\end{remark}


\begin{example}[{\bf Linear Control Systems on Heisenberg Group}]
	Let $G$ be the \textit{Heisenberg} group, that is, the set of all real matrix of the form
	\[
		\begin{pmatrix}
			1&x&z\\
			0&1&y\\
			0&0&1
		\end{pmatrix}.
	\]
	As usual, we identify this group with $\R^3$ in such a way that the group product is defined as 
	\[
		(x_1,y_1,z_1)\cdot (x_2,y_2,z_2)=(x_1+x_2,y_1+y_2,z_1+z_2+x_1y_2).
	\]
	In this case, the Lie algebra of the Heisenberg group is the vector space $\R^3$ with the Lie bracket defined as $[(x_1,y_1,z_1)(x_2,y_2,z_2)]=(0,0,x_1y_2-x_2y_1)$ and the exponential map $\exp\colon\R^3\to\R^3$ as
	\[
		\exp(x,y,z)=\left(x,y,\dfrac{xy}{2}+z\right).
	\]
	The right invariant vector fields $Y=(m,n,p)$ in $G$ have the form
	\[
		Y(x,y,z)=m\dfrac{\partial}{\partial x}+n\dfrac{\partial}{\partial y}+(my+p)\dfrac{\partial}{\partial z},
	\]
	while the matrix of a derivation $\DC$ associated to a linear vector field ${\cal X}$ is written as
	\[
		\DC=
		\begin{pmatrix}
			a_{11}&a_{12}&0\\
			a_{21}&a_{22}&0\\
			a_{31}&a_{32}&a_{11}+a_{22}
		\end{pmatrix}.
	\]
	We consider the following linear control system on $G$
	\[
		\displaystyle\frac{dg}{dt}={\cal X}(g)+uY(g),
	\]
	where $Y=(0,0,p)$ and ${\cal X}$ is linear vector field associated to derivation $\DC$. 
	
	In view of Remark \ref{solarbtime}, it is enough to express the solution of the system on an interval $[0,T]$ in which the control function $u$ is constant. In formula (\ref{sollysys}),  we note, by a direct calculation, that
	\[
		\dfrac{t}{n}ue^{it/n\cdot\DC}Y=\left(0,0,up\dfrac{t}{n}\right).
	\]
	Taking the exponential we obtain
	\begin{equation}\label{solutionheisenberg}
		\phi_t(u,e) = \left(0,0,\displaystyle\lim_{n\rightarrow\infty}\displaystyle\sum_{i=0}^{n-1}up\dfrac{t}{n}\right)=\left(0,0,\displaystyle\int_{0}^{t}upds\right)=(0,0,upt),
	\end{equation}
	where $u(t)=u$, for $0\leq t\leq T$. 
	
	From the solution  (\ref{solutionheisenberg}) it is easy to see  that the linear control system is not controllable on $G$. 
\end{example}
	
	\section{Controlability}
	
	In this section, our intention is to study controllability of a linear system in a Lie subgroup of $G$. We begin by establishing some current notation:
	\begin{eqnarray*}
		\LC & = & \operatorname{Lie}\{ \XC, Y_1, \ldots, Y_m\}\\
		\fa & = & \operatorname{Lie} \{ Y_1,\ldots, Y_m\}\\
		\DC\fa & = & \operatorname{span}\{ \DC^i(Y):\, Y \in \fh, i \in \mathbb{N} \}.
	\end{eqnarray*}

	In general the subalgebra $\fa$ is not ${\cal D}$-invariant. Consider, for example, a linear system 
	\[
		\dfrac{dx}{dt}=Ax+ub
	\]\newline
	defined on $\R^2$ where $A$ is a rotation matrix. It is clear that $[A,b]=-Ab \notin \fa$. For this reason, we consider $\fh = {\cal LA}(\DC\fa)$ the $\fg$ subalgebra generated by $\DC\fa$ and recall Proposition 3 from \cite{Jouan}:
	\begin{proposition}\label{propjouan}
		The subalgebra $\fh$ is $\DC$-invariant. Therefore, the Lie algebra $\LC$ satisfies
		\[
		\LC=\R{\cal X}\oplus \fh.
		\]
		Moreover, the linear system satisfies the rank condition if and only if $\fh=\fg$.
	\end{proposition}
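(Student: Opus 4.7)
The plan is to establish the three assertions in order, using only that $\DC$ is a derivation of $\fg$ and the bracket relation $[\XC,Y]=-\DC Y$ for $Y\in\fg$.

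For $\DC$-invariance of $\fh$, first observe that $\DC\fa$ is manifestly $\DC$-stable as a vector space, since it is spanned by the iterates $\DC^i(Y)$ with $Y\in\fa$. To promote this to $\fh={\cal LA}(\DC\fa)$, I would use that $\DC$ is a derivation of $\fg$, so $\DC[X,Y]=[\DC X,Y]+[X,\DC Y]$. An induction on the bracket depth of elements of $\fh$ built from generators in $\DC\fa$ then shows $\DC(\fh)\subseteq\fh$.

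For the decomposition $\LC=\R\XC\oplus\fh$, I would argue both inclusions separately. For $\R\XC+\fh\subseteq\LC$: clearly $\XC\in\LC$, and since $\fa\subseteq\LC$, the identity $\DC Y=-[\XC,Y]$ gives $\DC\fa\subseteq\LC$, whence $\fh={\cal LA}(\DC\fa)\subseteq\LC$. For the reverse inclusion, show that $\R\XC+\fh$ is itself a Lie subalgebra. The brackets to check are: $[\XC,\XC]=0$; $[\XC,Y]=-\DC Y\in\fh$ for $Y\in\fh$ by the $\DC$-invariance of $\fh$ just established; and $[Y,Y']\in\fh$ for $Y,Y'\in\fh$ since $\fh$ is a subalgebra. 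As $\R\XC+\fh$ is a subalgebra containing the generators $\XC,Y_1,\ldots,Y_m$ of $\LC$, it contains $\LC$. For directness of the sum: elements of $\fh\subseteq\fg$ are right-invariant and identified with their value at $e$, whereas $\XC(e)=0$; hence $\alpha\XC\in\fh$ forces $\alpha=0$ (whenever $\XC\not\equiv 0$, the opposite case being trivial).

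For the equivalence of the rank condition with $\fh=\fg$, evaluate at the identity. Since $\XC(e)=0$, one has $\LC(e)=\fh(e)=\fh$ under the canonical identification $\fg\cong T_e G$. Thus $\LC(e)=T_e G$ if and only if $\fh=\fg$. Conversely, if $\fh=\fg$, then by right-invariance $\fh(g)=dR_g(\fg)=T_g G$ at every $g\in G$, so the rank condition holds globally; the other direction is obtained by evaluation at $e$.

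The only mildly subtle point is the first step: one must check that the Leibniz identity lets $\DC$ be pushed through an arbitrary bracket built out of $\DC\fa$, which is exactly why I would phrase the argument as an induction on bracket depth. Everything after that is bookkeeping with $[\XC,Y]=-\DC Y$ and the definitions.
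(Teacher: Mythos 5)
The paper never proves this proposition --- it is recalled verbatim as Proposition~3 of Jouan's article \cite{Jouan} --- so there is no in-text argument to compare against; your proof is correct and is essentially the standard one. All three steps check out: the Leibniz-rule induction on bracket depth does give $\DC(\fh)\subseteq\fh$ once you note that $\DC\fa$ is itself $\DC$-stable; showing $\R\XC+\fh$ is a subalgebra containing the generators, together with $\DC\fa\subseteq\LC$ via $\DC Y=-[\XC,Y]$, gives the equality $\LC=\R\XC+\fh$, and directness follows from $\XC(e)=0$ versus the injectivity of evaluation at $e$ on right-invariant fields; and the rank-condition equivalence is correctly reduced to evaluation at $e$ plus right-invariance. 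The one point worth flagging is notational: your claim that $\R\XC+\fh$ contains $Y_1,\ldots,Y_m$ requires $\fa\subseteq\DC\fa$, i.e.\ the index $i=0$ must be admitted in the paper's definition of $\DC\fa$ (where, incidentally, ``$Y\in\fh$'' is a typo for ``$Y\in\fa$''). With the convention $0\in\N$ this is automatic; without it the proposition is simply false (take $\DC=0$, $\fa\neq 0$), so the convention is forced and you should state it explicitly.
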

	
	We denote by $H$ the closed subgroup generated by $\fh = {\cal LA}(\DC\fh)$. It follows that $\varphi_t(H) \subset H$. Furthermore, $\XC$ is tangent to $H$, in other words, $\XC(h) \in T_h H$ if $h \in H$.
	
	We now establish a first result about control system on $H$.
	\begin{proposition}
		If $h \in H$, then for all control $u$ and for all $t$ we have $\phi_t(u, h) \in H$. 
	\end{proposition}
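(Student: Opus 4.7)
The plan is to reduce the general case to the special case $h = e$ via Corollary \ref{Solarbpoint}, then exploit the explicit formula of Theorem \ref{Formgersol} together with the invariance $\varphi_t(H) \subset H$ and the fact that $H$ is closed and contains the exponentials of elements of $\fh$.

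First I would write, using Corollary \ref{Solarbpoint}, $\phi_t(u,h) = \phi_t(u,e)\,\varphi_t(h)$. Since $\varphi_t(H) \subset H$ was noted just before the proposition, $\varphi_t(h) \in H$. Because $H$ is a subgroup, everything reduces to showing $\phi_t(u,e) \in H$ for every admissible control $u$. Next, since $\mathcal{U}$ consists of piece-wise constant controls, I would reduce further to the case of a control that is constant on $[0,t]$. For a control built by concatenating constant pieces on $[0,t_1],(t_1,t_2],\ldots$, the cocycle identity from Remark \ref{solarbtime},
\[
\phi_{t_{i+1}}(u,e) = \phi_{t_{i+1}-t_i}(u|_{(t_i,t_{i+1}]},e)\cdot\varphi_{t_{i+1}-t_i}(\phi_{t_i}(u,e)),
\]
combined with $\varphi_s(H) \subset H$ and the subgroup property of $H$, gives the general case by induction on the number of pieces.

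For constant $u=(u_1,\ldots,u_m)$, Theorem \ref{Formgersol} supplies
\[
\phi_t(u,e) = \lim_{n\to\infty}\prod_{i=0}^{n-1}\varphi_{it/n}\!\circ\exp\!\Bigl(\tfrac{t}{n}\sum_{j=1}^{m}u_j Y_j\Bigr).
\]
From the definition of $\DC\fa$ (taking $i=0$) we have $\fa \subset \fh$, so $\sum_{j}u_j Y_j \in \fh$ and hence $\exp\bigl(\tfrac{t}{n}\sum_j u_j Y_j\bigr) \in H$. Applying the automorphism $\varphi_{it/n}$, which preserves $H$, each factor remains in $H$, so every finite partial product is in $H$. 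Since $H$ is closed in $G$, the limit $\phi_t(u,e)$ lies in $H$, completing the reduction begun in the first step.

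No step is really a serious obstacle; the one point requiring care is justifying that each factor of the Lie product formula lies in $H$, which rests on the (trivial but essential) inclusion $\fa\subset\fh$ together with the $\varphi_t$-invariance of $H$ and its closedness in $G$, all of which are furnished by the discussion preceding the statement.
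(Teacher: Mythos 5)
Your argument is correct and follows essentially the same route as the paper: reduce via Corollary \ref{Solarbpoint} to showing $\phi_t(u,e)\in H$, then read this off from formula (\ref{sollysys}). The only difference is that you spell out what the paper compresses into ``from (\ref{sollysys}) we can see that $\phi_t(u,e)\in H$'' --- namely the cocycle reduction to constant controls, the inclusion $\fa\subset\fh$, the $\varphi_t$-invariance of $H$, and its closedness to pass to the limit --- which is a welcome completion rather than a departure.
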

	\begin{proof}
		Corollary \ref{Solarbpoint} gives that $\phi_t(u,h) = \phi_t(u,e) \varphi_t(h)$. We know that $\varphi_t(h) \in H$ for $h \in H$, since $H$ is $\varphi_t$-invariant. From solution (\ref{sollysys}) we can see that $\varphi_t(u,e) \in H$. Therefore, $\phi_t(u,h) \in H$.
	\end{proof}
	
	Here, the key of proof is the fact $\phi_t(u,e) \in H$. This fact, given by Theorem \ref{Formgersol}, allow us to study  controllability  in a new perspective. For example, a direct result about controllability is obtained. 
	\begin{theorem}
		If $H \not\subseteq G$, then $\Sigma_L$ is not controllable on $G$. Equivalently, if $\fh\not\subseteq \fg$, then $\Sigma_L$ is not controllable on $G$. 
	\end{theorem}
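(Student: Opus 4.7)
The plan is to read this theorem as a direct corollary of the previous proposition, which said that trajectories starting in $H$ remain in $H$. First I would note that the identity $e$ always lies in $H$ (any subgroup contains the identity). Applying the previous proposition with $h=e$ gives $\phi_t(u,e)\in H$ for every admissible control $u$ and every $t\geq 0$. Hence the reachable set from the identity is contained in $H$, that is, $\mathcal{A}(e)\subseteq H$. Under the hypothesis that $H$ is a proper subset of $G$ (which is evidently the intended reading of the statement), it follows immediately that $\mathcal{A}(e)\subseteq H\subsetneq G$, and therefore $\Sigma_L$ is not controllable at $e$, which contradicts controllability on $G$.

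For the equivalence with the Lie-algebraic statement, I would invoke the standard correspondence between connected Lie subgroups and Lie subalgebras of $G$. Since $G$ is connected, the connected Lie subgroup with Lie algebra $\fg$ is $G$ itself, so $\fh=\fg$ forces $H=G$. Contrapositively, if $H\neq G$, then $\fh\neq\fg$. Combined with the first part, $\fh\subsetneq\fg$ implies $H\subsetneq G$ (using that $H$ is built as the closed subgroup generated by $\fh$, whose Lie algebra is $\fh$ by construction), and the non-controllability follows from the previous paragraph.

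I do not expect any serious obstacle here; the only mildly delicate point is the equivalence of the group-level and algebra-level conditions, where one should remember that "closed subgroup generated by $\fh$" has Lie algebra $\fh$ itself, so that $H=G$ if and only if $\fh=\fg$. The conceptual content of the theorem is entirely carried by the previous proposition: once $\phi_t(u,e)\in H$ has been established via formula (\ref{sollysys}) and the $\varphi_t$-invariance of $H$, no new work is needed.
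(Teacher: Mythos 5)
Your proposal is correct and follows essentially the same route as the paper: both arguments reduce the theorem to the preceding proposition that trajectories starting in $H$ remain in $H$, so that points of $G\setminus H$ are unreachable (you specialize to the initial point $e$, the paper phrases it for an arbitrary $h\in H$). Your additional remarks on reading $H\not\subseteq G$ as $H\subsetneq G$ and on the equivalence $H=G \Leftrightarrow \fh=\fg$ are sensible clarifications that the paper's proof leaves implicit.
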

	\begin{proof}
		We begin by observing that $\varphi_t(u,h) \in H$ when $h \in H$ by Proposition above. Thus, for any $g \in G\backslash H$, it is not possible to find some control and time $t>0$ such that $\varphi_t(u,h) =g$. It means that $\Sigma$ is not controllable on $G$.
	\end{proof}

	Theorem above shows a restriction to controllability of $\Sigma$ in $G$. However, it does not show any information about controllability of $\Sigma$ on $H$. Our idea is to give controllability conditions on $H$. Our first step is to point out that the rank condition occur naturally. 

	\begin{corollary}
		The linear system on $H$ satisfies the rank condition.
	\end{corollary}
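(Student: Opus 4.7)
The plan is to view the restricted system on $H$ as a linear control system in its own right and then apply Proposition \ref{propjouan} to that restricted system. The key observation is that the Lie algebra of $H$ is exactly $\fh$, so the rank condition on $H$ amounts to showing that the analogue of $\fh$ computed inside $\fh$ is all of $\fh$.

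First I would verify that $\Sigma_L$ genuinely restricts to a linear control system on $H$. The text already records that $\XC$ is tangent to $H$ and that $\varphi_t(H)\subset H$, so $\XC|_H$ is a linear vector field on $H$ with associated derivation $\DC|_\fh\colon\fh\to\fh$; the fact that this restriction is well-defined uses Proposition \ref{propjouan}, namely that $\fh$ is $\DC$-invariant. Moreover, by construction $\fa\subset\fh$ (since every $Y_j$ lies in $\DC\fa$ through the $i=0$ term), so the right-invariant controlled fields $Y_1,\ldots,Y_m$ are tangent to $H$ as well.

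Next I would compute the relevant invariants for the restricted system. Writing $\DC'=\DC|_\fh$, the controlled subalgebra is still $\fa$ (the Lie algebra generated by $Y_1,\ldots,Y_m$ inside $\fh$ is the same as inside $\fg$), and the span $\DC'\fa$ equals $\DC\fa$ because $\DC$ preserves $\fh$ and $\fa\subset\fh$. Therefore the analogue of $\fh$ for the restricted system is
\[
\LC\AC(\DC'\fa)=\LC\AC(\DC\fa)=\fh,
\]
which is precisely the whole Lie algebra of $H$.

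Finally, applying Proposition \ref{propjouan} to the linear control system on $H$ yields the rank condition there, since the subalgebra generated by $\DC'\fa$ is all of $\fh$. I do not expect any serious obstacle; the only subtlety is bookkeeping, making sure that the passages between $\fg$ and $\fh$ (in particular $\DC$-invariance of $\fh$ and containment of $\fa$ in $\fh$) are used explicitly so that one can legitimately invoke Proposition \ref{propjouan} inside $H$.
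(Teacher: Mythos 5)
Your proposal is correct and follows essentially the same route as the paper, which simply invokes Proposition \ref{propjouan} together with the fact that $\fh$ is by definition the Lie algebra of $H$. You merely make explicit the bookkeeping (tangency of $\XC$ and the $Y_j$ to $H$, $\DC$-invariance of $\fh$, and $\LC\AC(\DC|_\fh\,\fa)=\fh$) that the paper leaves implicit.
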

	\begin{proof}
		It is a consequence of the definition of $H$ and Proposition \ref{propjouan} since $\fh$ is the Lie algebra of $H$.
	\end{proof}

	To show our first results we need to introduce the stable, unstable and central subgroup to linear system on $H$. Since $\fh$ is $\DC$-invariant, it follows that $\DC$ is a derivation on $\fh$. From eigenvalues of derivation $\DC$ on $\fh$ we can written 
	\[
	\fh^+=\bigoplus_{\alpha; \mathrm{Re}(\alpha)>0}\fh_{\alpha}, \;\;\;\;\fh^0=\bigoplus_{\alpha; \mathrm{Re}(\alpha)=0}\fh_{\alpha}, \;\;\;\mbox{ and }\;\;\;\fh^-=\bigoplus_{\alpha; \mathrm{Re}(\alpha)<0}\fh_{\alpha},
\]
where $\alpha$ are eigenvalues of the derivation $\DC$, such that 
\[
	\fh =\fh^+\oplus \fh^{0} \oplus \fh^{-} \ \ \mbox{and} \ \ [\fh_{\alpha},\fh_{\beta}] = \fh_{\alpha + \beta},
\]
with $\alpha + \beta = 0$ if the sum is not an eigenvalue. Let us denote by $H^+$, $H^0$ and $H^-$ the connected $\varphi_t$-invariant Lie subgroups $H^+$, $H^0$ and $H^-$ with Lie algebras $\fh^+$, $\fh^0$ and $\fh^-$, respectively.

To make the next proof clear, we remember the notation of attainable set from identity $e$: 
\[
	\AC_H := \{ x \in H: \phi_t(u,e) = x, \, \mbox{for some}\,  t \geq 0 \}.
\]
If we change $\XC$ by $-\XC$ we obtain the inverse linear flow $\varphi_t^*$, which satisfies the following property $\varphi_t^* = \varphi_{-t}$. Thus, we have the inverse linear control system defined by 
\[
	\displaystyle\frac{dg}{dt}=-{\cal X}(g)+\displaystyle\sum_{j=1}^{m} u_{j}Y_j(g).
\]
Its attainable set from identity is denoted by 
\[
	\AC_H^* := \{ x \in H: \phi_t^*(u,e) = x, \, \mbox{for some}\,  t \geq 0 \}.
\]

\begin{theorem}
	Let $G$ be a solvable Lie group and $H$ the Lie subgroup of $G$ given above. If a derivation $\DC$ of linear control field $\XC$ has only eigenvalues with zero real part when it restricts to $H$, then $\Sigma_L$ is controllable on $H$ . 
\end{theorem}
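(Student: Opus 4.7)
The spectral hypothesis that every eigenvalue of $\DC|_\fh$ has zero real part forces $\fh^+ = \fh^- = 0$, so $\fh = \fh^0$ and $H = H^0$; in particular, $e^{t\DC|_\fh}$ is a bounded one-parameter subgroup of $\operatorname{GL}(\fh)$ whose closure is therefore a compact connected torus. By the preceding Corollary, the linear system restricted to $H$ satisfies the rank condition, so Krener's accessibility theorem yields $\inner\AC_H \neq \emptyset$. My strategy is to combine these two facts with a recurrence property of $\varphi_t|_H$ to prove that $\overline{\AC_H}$ is a subgroup of $H$, hence equals $H$.

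\textbf{Poisson stability of the drift flow.} The central dynamical step is to show that $\varphi_t|_H$ is Poisson stable: for every neighborhood $V$ of $e$ in $H$, every compact $K \subset H$, and every $T_0 > 0$, there exists $T > T_0$ with $\varphi_T(g) \in V g$ for all $g \in K$. At the algebra level this is immediate from Kronecker's theorem applied to the compact torus $\overline{\{e^{t\DC|_\fh}\}} \subset \operatorname{GL}(\fh)$: the identity of $\operatorname{GL}(\fh)$ is approached by $e^{T\DC|_\fh}$ along a sequence $T \to \infty$. Lifting this to the group $H$ is where solvability enters, through exponential coordinates of the second kind adapted to the derived series of $\fh$, in which algebra-level near-returns translate into group-level near-returns uniformly on compacta.

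\textbf{Inverting attainable elements and conclusion.} With Poisson stability in hand, I would prove $\overline{\AC_H}^{-1} \subset \overline{\AC_H}$ by a concatenation argument based on the explicit solution formula (\ref{sollysys}). Given $g = \phi_t(u,e) \in \AC_H$, one prolongs the trajectory by a long waiting segment whose duration is chosen, via Poisson stability, so that the accumulated drift factor $\varphi_{t+s}$ returns near the identity; suitably chosen invariant-part controls on top of this waiting segment — available because under the rank condition the iterates $\DC^k Y_j$ span $\fh$ and thus every direction in $\fh$ can be generated through the factors $\varphi_{it/n} \circ \exp((t/n)W)$ appearing in (\ref{sollysys}) — produce a forward trajectory whose endpoint approximates $g^{-1}$. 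Consequently $\overline{\AC_H}$ is a closed subsemigroup closed under inversion inside the connected group $H$, so it is a closed subgroup, and since it has nonempty interior it equals $H$; the standard semigroup fact $\inner(\overline{\AC_H}) \subset \AC_H$ then yields $\AC_H = H$. The main obstacle lies in this last step: propagating the algebra-level near-returns of $\varphi_t$ through the $n \to \infty$ limit in (\ref{sollysys}) and matching them with invariant-part controls to approximate a prescribed inverse requires uniform error control in both $n$ and the waiting time, and it is precisely here that the solvability of $H$ is decisive.
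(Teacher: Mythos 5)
There is a genuine gap at the very first step of your argument, and it propagates through everything that follows. From ``every eigenvalue of $\DC|_{\fh}$ has zero real part'' you conclude that $e^{t\DC|_{\fh}}$ is a bounded one-parameter subgroup of $\operatorname{GL}(\fh)$ whose closure is a compact torus, so that Kronecker's theorem produces returns of $\varphi_T$ arbitrarily close to the identity. That implication is false unless $\DC|_{\fh}$ is semisimple: a nontrivial Jordan block over a purely imaginary (in particular zero) eigenvalue --- e.g.\ a nonzero nilpotent derivation, which is exactly what one expects on a solvable or nilpotent $\fh$ --- makes $e^{t\DC|_{\fh}}$ grow polynomially, so the one-parameter group is unbounded, has no compact closure, and $\varphi_t$ is not Poisson stable in your sense. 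Since the hypothesis of the theorem controls only the spectrum of $\DC|_{\fh}$ and not its Jordan structure, your central dynamical step (waiting for the drift to return near the identity) is simply unavailable in general. A secondary problem: for a linear, as opposed to invariant, system the reachable set from the identity is not a subsemigroup of $H$; the correct composition law is $\AC_{t+s}=\AC_s\,\varphi_s(\AC_t)$, so the concluding ``closed subsemigroup closed under inversion, hence subgroup'' step and the appeal to $\inner(\overline{\AC_H})\subset\AC_H$ cannot be invoked as stated.

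For comparison, the paper does not attempt any of this: its proof is a two-line reduction to Da Silva's results on solvable groups --- the ad-rank condition on $H$ gives openness of $\AC_H$ and $\AC_H^*$ (Proposition 2.13 of that reference), and Theorem 4.1 there yields controllability when all eigenvalues of the derivation have zero real part. The hard analysis your sketch tries to shortcut, namely handling the unipotent part of $e^{t\DC}$ by estimates along the derived series of $\fh$ (which is where solvability is genuinely used), is precisely the content of the cited theorem. Under the extra hypothesis that $\DC|_{\fh}$ is semisimple your recurrence argument could likely be completed, but that proves a strictly weaker statement than the one claimed.
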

\begin{proof}
	We first observe that $H$ is solvable. Furthermore, it is true that $\Sigma_L$ satisfies ad-rank on $H$. Thus, Proposition 2.13 in \cite{dasilva} assures that $\AC_H$ and $\AC^*_H$ are open. Also, from Theorem 4.1 of \cite{dasilva} we conclude that $\Sigma_L$ is controllable in $H$.
\end{proof}

\begin{theorem}
	Let $G$ be a nilpotent Lie group and $H$ the Lie subgroup of $G$ given above. Assume that $\Sigma$ is bounded. Then $\Sigma_L$ is controllable on $H$ if and only if $H =H_0$.
\end{theorem}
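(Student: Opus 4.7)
The plan is to prove the two implications separately.

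\emph{Sufficiency.} Assume $H=H^0$. Then all eigenvalues of the restricted derivation $\DC|_\fh$ have zero real part. Since every nilpotent Lie group is solvable, the hypotheses of the previous theorem are satisfied, and it yields directly that $\Sigma_L$ is controllable on $H$.

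\emph{Necessity.} I argue the contrapositive: if $H\neq H^0$, then $\Sigma_L$ is not controllable on $H$. Suppose first that $\fh^-\neq 0$. Because $\fh$ is nilpotent, the exponential $\exp\colon\fh\to H$ is a diffeomorphism and Baker--Campbell--Hausdorff terminates, so I can transfer the solution formula of Theorem \ref{Formgersol} to the Lie algebra side. Projecting onto $\fh^-$, the drift acts by $e^{t\DC}|_{\fh^-}$, whose eigenvalues all have strictly negative real part, so this component contracts exponentially. A control input with values in a bounded set of diameter at most $M$ contributes, at each step, a term whose $\fh^-$-component is bounded above by roughly $\int_0^{t}\|e^{(t-s)\DC|_{\fh^-}}\|\, M\, ds$; since $\|e^{s\DC|_{\fh^-}}\|$ is integrable on $[0,\infty)$, this is uniformly bounded in $t$. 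Hence the $\fh^-$-projection of $\AC_H$ is bounded, giving $\AC_H\neq H$ and contradicting controllability. The case $\fh^+\neq 0$ follows by symmetry, applied to the time-reversed system: its drift $-\XC$ has eigenvalues with negative real part on $\fh^+$, so the same argument shows that the $\fh^+$-projection of $\AC_H^*$ is bounded, which again contradicts controllability.

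The principal obstacle is that the decomposition $\fh=\fh^+\oplus\fh^0\oplus\fh^-$ is only a vector-space direct sum: $\fh^-$ is not an ideal because $[\fh^+,\fh^-]\subseteq\fh^0$ is typically nonzero, so the projection $\fh\to\fh^-$ is not a Lie algebra homomorphism. Consequently, the bound on the $\fh^-$-component of $\phi_t(u,e)$ must absorb cross-term corrections coming from the BCH expansion. Nilpotency keeps these corrections polynomial in the $\fh^+$ and $\fh^0$ components, and the boundedness of the controls keeps those components tame over any finite time interval; making these cross-term estimates rigorous, and extracting the exponential decay in the $\fh^-$ direction as a genuine obstruction, is the technical heart of the argument and is precisely where the nilpotency assumption is essential.
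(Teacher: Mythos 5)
The paper disposes of this theorem in one line, by citing Theorem 4.5 of \cite{dasilva}, which states precisely that a bounded linear system satisfying the rank condition on a nilpotent Lie group is controllable if and only if the group coincides with its central part; the only content of the paper's proof is the observation that the restricted system on $H$ satisfies the hypotheses of that theorem (nilpotency of $H$, boundedness, and the rank condition, which holds on $H$ by construction). Your sufficiency direction is correct and is actually a legitimate alternative route: $H=H^0$ means $\DC|_{\fh}$ has only eigenvalues with zero real part, $H$ is solvable because it is nilpotent, and the preceding theorem of the paper applies. That half stands on its own.

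The necessity direction, however, has a genuine gap, and you have named it yourself. The claim that the $\fh^-$-component of $\AC_H$ is bounded rests on the estimate $\int_0^t\|e^{(t-s)\DC|_{\fh^-}}\|M\,ds<\infty$, which is the correct statement for a linear system on $\R^n$ but does not transfer to $H$ by fiat: the solution is the infinite product of Theorem \ref{Formgersol}, the projection onto $\fh^-$ is not a Lie algebra homomorphism, and the BCH cross-terms you mention are exactly where the work lies. What is missing is the structural input that makes the argument close: for a nilpotent group, $H^{+,0}=H^+H^0$ and $H^-$ are closed subgroups and the product map $H^{+,0}\times H^-\to H$ is a diffeomorphism, so there is a well-defined (non-homomorphic but proper) projection onto $H^-$ along which one can prove that the reachable set of a bounded system stays in a compact set; this is the content of the relevant results in \cite{dasilva} and cannot be waved through as "polynomial corrections tamed by nilpotency" without an actual induction on the lower central series. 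As written, your proof of necessity is a plausible plan, not a proof; either carry out those estimates or, as the paper does, invoke Theorem 4.5 of \cite{dasilva} directly.
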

\begin{proof}
	It is a direct application of Theorem 4.5 in \cite{dasilva}.
\end{proof}

Other way to study controllability is by means of rank condition and Lie saturate, as presented at Theorem 12 in \cite[ch.3]{jurdjevic}. We show that $\DC$-invariance is sufficient condition to controllability of $\Sigma_L$ in $H$. In fact, from Proposition 4 in \cite{Jouan} we have that $\fa$ is included on Lie Saturate. Then a possible condition to controllability is $\fa = \fh$. Our next result show a condition to occur this equality.

\begin{proposition}
	$\fa = \fh$ if and only if $\fa$ is $\DC$-invariant.
\end{proposition}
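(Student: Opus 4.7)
The plan is to handle the two implications separately, with the forward direction being immediate from Proposition \ref{propjouan}.

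For the forward direction, I would assume $\fa=\fh$. Since $\fh$ is $\DC$-invariant (Proposition \ref{propjouan}), the equality transports this property to $\fa$, finishing this direction in one line.

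For the backward direction, I would assume $\fa$ is $\DC$-invariant and prove the two inclusions. To show $\fh\subseteq\fa$: $\DC$-invariance gives $\DC^i(Y)\in\fa$ for every $Y\in\fa$ and every $i\in\N$, so $\DC\fa\subseteq\fa$; since $\fa$ is already a Lie subalgebra, taking the Lie algebra generated yields $\fh=\mathcal{LA}(\DC\fa)\subseteq\fa$. To show $\fa\subseteq\fh$: each generator $Y_j$ lies in $\DC\fa$ (via $\DC^0(Y_j)=Y_j$), so $Y_j\in\fh$; because $\fh$ is a Lie subalgebra, $\fa=\operatorname{Lie}\{Y_1,\ldots,Y_m\}\subseteq\fh$.

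The only subtle point — and the thing to be careful about — is the convention in the definition of $\DC\fa$: the inclusion $\fa\subseteq\fh$ uses that $i=0$ is allowed (so that $\DC\fa\supseteq\fa$). This is the implicit convention used in the paper, since it is also what makes the decomposition $\LC=\R\XC\oplus\fh$ in Proposition \ref{propjouan} consistent with having each $Y_j\in\fh$. Once this is noted, the argument is a routine check and no genuine obstacle appears.
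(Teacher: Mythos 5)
Your proof is correct and follows essentially the same route as the paper: one direction via the $\DC$-invariance of $\fh$ from Proposition \ref{propjouan}, the other via $\DC\fa\subseteq\fa$ forcing $\fh=\mathcal{LA}(\DC\fa)\subseteq\fa$. You are in fact more careful than the paper, which silently assumes the inclusion $\fa\subseteq\fh$ that you justify explicitly through the $i=0$ convention in the definition of $\DC\fa$.
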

\begin{proof}
	We first suppose that $\fa$ is $\DC$-invariant. Thus, it is direct that $\DC\fa \subset \fa$. Consequently, $\fh \subset \fa$. We thus get $\fh = \fa$. 
	
	Conversely, suppose that $\fa = \fh$. Then it is true that $\DC(Y) \in \fa$ for all $Y \in \fa$. It means that $\fa$ is $\DC$-invariant.  
\end{proof}

\begin{theorem}
	If $\fa$ is $\DC$-invariant, then $\Sigma$ is controllable on $H$.
\end{theorem}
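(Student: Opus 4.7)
The plan is to combine the preceding Proposition with the Lie saturate technique from Jurdjevic mentioned in the discussion above.

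First I would apply the previous Proposition to the hypothesis that $\fa$ is $\DC$-invariant in order to obtain the identity $\fa = \fh$. This is the crucial reduction: it turns the problem of controlling a drift-plus-controls system on $H$ into a purely ``invariant'' question about the Lie algebra generated by the control vector fields. I would then recall the setup on $H$: by construction $\fh$ is the Lie algebra of the closed subgroup $H$, and by the Corollary proved earlier in this section the restriction of $\Sigma_L$ to $H$ satisfies the rank condition.

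Next I would invoke Proposition 4 of \cite{Jouan}, which asserts that the control subalgebra $\fa$ is contained in the Lie saturate $LS(\Sigma_L)$ of the linear system. Combined with $\fa = \fh$, this yields
\[
    \fh \;=\; \fa \;\subseteq\; LS(\Sigma_L) \;\subseteq\; \fh,
\]
so the Lie saturate of the system viewed on $H$ equals $\fh$, the full Lie algebra of $H$. At this point Theorem 12 of \cite[ch.3]{jurdjevic} applies directly: a system whose Lie saturate coincides with the whole Lie algebra of the underlying (connected) group is controllable. Since $H$ is connected by construction and the previous Proposition of this section already ensures that trajectories starting at $e$ stay inside $H$, we conclude that $\Sigma_L$ is controllable on $H$.

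The only delicate point I foresee is verifying that the hypotheses of Jurdjevic's theorem are genuinely met on $H$ rather than on the ambient group $G$; this is why the rank Corollary and the $\varphi_t$-invariance of $H$ (ensuring that $\XC$ is tangent to $H$ and that $\phi_t(u,e)\in H$) have to be invoked explicitly. Once the system is legitimately regarded as a control system on $H$ with rank condition and full Lie saturate, controllability on $H$ is immediate.
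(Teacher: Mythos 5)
Your proposal is correct and follows essentially the same route as the paper: reduce via the preceding Proposition to $\fa=\fh$, invoke Proposition 4 of \cite{Jouan} to place $\fa$ in the Lie saturate, note the rank condition on $H$, and conclude with Theorem 12 of \cite[ch.3]{jurdjevic}. Your version is in fact slightly more careful than the paper's, since you spell out the sandwich $\fh=\fa\subseteq LS(\Sigma_L)\subseteq\fh$ and check that the system genuinely restricts to $H$.
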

\begin{proof}
	If $\fa$ is $\DC$-invariant, then $\fa =\fh$. From Proposition 4 in \cite{Jouan} it follows that $\fh$ is the Lie saturate. On the other hand, the control flow $\phi_t$ satisfies the rank-condition. Therefore, by Theorem 12 in \cite{jurdjevic}, the linear control system is controllable in $H$.
\end{proof}


\section{Inner Derivation Case}

In this section we apply formula (\ref{sollysys}) when $\DC$ is inner, namely, there is $X\in\fg$ such that $\DC=ad(X)$ (see for instance \cite{San Martin}). In particular, the linear vector field $\XC$ can be decomposed as $\XC=X+dIX$, where $X$ is a right invariant vector field and $dIX$ is the vector field induced by $I(g)=g^{-1}$. These facts allow us to describe the solution in a simpler way and to relate it to the solution of an associated invariant system. We begin by remembering, in this case, that the flow $\varphi_t$ can be written as $\varphi_t(g) = \exp(tX)g\exp(-tX)$ (see e.g. \cite{Ayala} or \cite{Jouan}). Note that the results stated along this section are, in particular, true for semisimple Lie groups since all derivations defined on their Lie algebras are inner (see for instance \cite{San Martin}).	

Given a linear control system $\Sigma_L$, we yield the following right-invariant control system
\[
	\Sigma_I\colon\dfrac{dg}{dt}=X(g)+\displaystyle\sum_{j=1}^mu_jY_j(g),
\]
where $X$ is such that $\XC=X+dIX$.

Replacing $\varphi_t(g)=\exp(tX)g\exp(-tX)$ in the formula (\ref{sollysys}) of Theorem \ref{Formgersol}, we obtain the following description for the solutions of $\Sigma_L$:

\begin{theorem}\label{solsemsimpgroup} 
	Let $u$ be an admissible control function and suppose that it is constant on an interval $[0,T]$. Then the solution of the linear system with control function $u$ is the curve
	\begin{equation}\label{solsemsimp}
		\phi_t(u,e)= \exp\left(tX+\displaystyle\sum_{j=1}^mtu_jY_j\right)\exp(-tX), \mbox{\hspace{0.5cm}} 0\leq t\leq T.
	\end{equation}
\end{theorem}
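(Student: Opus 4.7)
The plan is to start from the general solution formula \eqref{sollysys} established in Theorem \ref{Formgersol} and to exploit the inner structure, namely $\varphi_s(g) = \exp(sX)g\exp(-sX)$, to recognise the product over $i$ as a telescoping one.

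First I would set $W = \sum_{j=1}^{m} u_j Y_j$ (treated as an element of $\fg$, which is legitimate since $u$ is constant on $[0,T]$), together with the abbreviations $A = \exp((t/n)X)$ and $B = \exp((t/n)W)$. Substituting the inner flow formula into \eqref{sollysys} turns each factor $\varphi_{it/n}(\exp((t/n)W))$ into $A^{i}BA^{-i}$, so that
\[
\phi_t(u,e) = \lim_{n\to\infty} \prod_{i=0}^{n-1} A^{i} B A^{-i}.
\]

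The key observation is the telescoping identity
\[
\prod_{i=0}^{n-1} A^{i} B A^{-i} = (BA)^{n} A^{-n},
\]
which I would verify by induction on $n$: the base case $n=1$ is clear, and assuming the identity for $n$, the step follows from $(BA)^{n} A^{-n} \cdot A^{n} B A^{-n} = (BA)^{n} B A^{-n} = (BA)^{n+1} A^{-(n+1)}$. Substituting back in the limit gives
\[
\phi_t(u,e) = \lim_{n\to\infty} \bigl( \exp((t/n)W)\exp((t/n)X) \bigr)^{n} \cdot \exp(-tX).
\]

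Finally, I would apply the Lie product formula to recognise the first limit as $\exp(t(W+X)) = \exp\bigl(tX + \sum_{j=1}^{m} tu_j Y_j\bigr)$, which yields exactly \eqref{solsemsimp}. The only delicate point is the telescoping identity; once that is in hand, the Trotter limit and the factorisation of $\exp(-tX)$ out of the product are immediate. The restriction that $u$ be constant on $[0,T]$ is needed so that $W$ is a fixed element of $\fg$ throughout the interval, and the extension to piece-wise constant controls follows from the cocycle property noted in Remark \ref{solarbtime}.
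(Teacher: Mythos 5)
Your proposal is correct and follows essentially the same route as the paper: substitute the inner flow $\varphi_s(g)=\exp(sX)g\exp(-sX)$ into formula \eqref{sollysys}, telescope the resulting product into $\left(\exp((t/n)W)\exp((t/n)X)\right)^{n}\exp(-tX)$, and finish with the Lie product formula. Your inductive formulation of the telescoping identity $\prod_{i=0}^{n-1}A^{i}BA^{-i}=(BA)^{n}A^{-n}$ is in fact a slightly cleaner packaging of the paper's step of inserting the identity factor $\exp((t/n)Y)\exp((t/n)X)\exp(-(t/n)X)\exp(-(t/n)Y)$, but it is the same computation.
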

\begin{proof}
	We first simplify the notation by denoting $Y=\displaystyle\sum_{j=1}^mu_jY_j$. Replacing 
	\[
		\varphi_{it/n}\left(\exp\dfrac{t}{n}Y\right)=\exp\left(\dfrac{it}{n}X\right) \exp\left(\dfrac{t}{n}Y\right)\exp\left(-\dfrac{it}{n}X\right)
	\]
	in formula (\ref{sollysys}) we see that
	\begin{eqnarray*}
		\phi_t(u,e)
		& = &\displaystyle\lim_{n\rightarrow\infty}\displaystyle\prod_{i=0}^{n-1}\varphi_{it/n}\left(\exp\dfrac{t}{n}Y\right)\\
		& = &\displaystyle\lim_{n\rightarrow\infty}\displaystyle\prod_{i=0}^{n-1}\exp\left(\dfrac{it}{n}X\right) \exp\left(\dfrac{t}{n}Y\right)\exp\left(-\dfrac{it}{n}X\right)\\
		& = &\displaystyle\lim_{n\rightarrow\infty}\left(\exp\dfrac{t}{n}Y\exp\dfrac{t}{n}X\right)^{n-1}\exp\left(\dfrac{t}{n}Y\right)\exp\left(\dfrac{(1-n)t}{n}X\right).
	\end{eqnarray*}
	Inserting $\exp\left(\dfrac{t}{n}Y\right)\exp\left(\dfrac{t}{n}X\right)\exp\left(-\dfrac{t}{n}X\right)\exp\left(-\dfrac{t}{n}Y\right)$ in the above expression  we can assert that
	\[
		\phi_(u,e) = \displaystyle\lim_{n\rightarrow\infty}\left(\exp\dfrac{t}{n}Y\exp\dfrac{t}{n}X\right)^{n}\exp\left(-tX\right).
	\]
	Using the  Lie product Formula we conclude that
	\[
		\phi_t(u,e) = \exp\left(tX+\displaystyle\sum_{j=1}^mtu_jY_j\right)\exp(-tX),
	\]
	and proof is complete.
\end{proof}


\subsection{Application}

This section presents applications of the above results in case of  linear systems with inner derivation. Initially, we consider a system defined in $GL(n;\R)^+$ in the same way as in \cite{markus}, and describe the solution curve for this case. According to \cite{Biggs}, there are four three-dimensional semisimple Lie groups, which are $Sl(2)$, $SU(2)$, $SO(3)$ and $SO(2,1)_0$, so we construct the solutions of linear systems in these Lie groups.

\subsection{Lie group in $GL(n;\R)^+$}
	Let $G=Gl(n;\R)^+$ be the set of all $n\times n$ real matrices with positive determinant and $\fg=\mathfrak{gl}(n;\R)$ its Lie algebra. For $A \in \fg$, the vector field ${\cal X}_A(g)=Ag-gA$ is linear and its associated flow is $\varphi_t(g)=e^{tA}ge^{-tA}$. Given $B_1,\ldots,B_m \in \fg$, consider the right-invariant fields $B_j(g)=B_jg$ and the linear control system
	\[
		\dfrac{dg}{dt}=Ag-gA+\displaystyle\sum_{j=1}^mu_jB_j(g).
	\]
	
	Applying formula (\ref{sollysys}) for a constant control $u$, we can write the solution at the identity we can write the solution as
	\[
		\phi_t(u,e) = e^{t\left(A+\sum u_jBj\right)}e^{-tA}.
	\]
	Now,  considering a concatenation of two constant controls $u_1$ and $u_2$ on an interval $[0,t+s]$, as in Remark \ref{solarbtime}, the solution becomes
	\[
		\phi_t(u,g) = e^{s\left(A+\sum u_{2j}Bj\right)}e^{t\left(A+\sum u_{1j}Bj\right)}e^{-tA},
	\]
	where $u_1 =(u_{11}, \ldots, u_{1n})$ and $u_2 =(u_{21}, \ldots, u_{2n})$.

	\begin{corollary}
		If $\operatorname{span}\{B_1,B_2, \ldots, B_n\}$ is invariant by $A$, then the control system is controllable on $\operatorname{span}\{B_1,B_2, \ldots, B_n\}$.
	\end{corollary}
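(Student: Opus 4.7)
The plan is to reduce the corollary to the theorem proved at the end of Section 4, which guarantees controllability on $H$ whenever $\fa$ is $\DC$-invariant. First I identify the derivation associated to $\XC_A(g)=Ag-gA$: its flow $\varphi_t(g)=e^{tA}ge^{-tA}$ gives $\DC=\operatorname{ad}(A)$ on $\mathfrak{gl}(n,\R)$, so $\DC Y=[A,Y]$. The hypothesis that $\operatorname{span}\{B_1,\ldots,B_m\}$ is $A$-invariant then reads $[A,B_j]\in\operatorname{span}\{B_1,\ldots,B_m\}$ for every $j$, which is precisely $\DC$-invariance of this span.

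Next I would upgrade this to $\DC$-invariance of $\fa=\operatorname{Lie}\{B_1,\ldots,B_m\}$ via the Leibniz rule. Consider $V=\{Y\in\fa:\DC Y\in\fa\}$; it is a linear subspace of $\fa$, and by the previous step it contains each $B_j$. For $Y,Z\in V$ one has
\[
    \DC[Y,Z]=[\DC Y,Z]+[Y,\DC Z]\in\fa,
\]
since every term lies in $\fa$, so $V$ is closed under the bracket. Hence $V$ is a Lie subalgebra of $\fa$ containing the generators, forcing $V=\fa$ and therefore $\DC\fa\subset\fa$.

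With $\fa$ shown to be $\DC$-invariant, the proposition immediately preceding the corollary gives $\fa=\fh$, and the controllability theorem of Section 4 then yields controllability of $\Sigma_L$ on the closed subgroup $H$ with Lie algebra $\fh=\fa$. The only real subtlety — and the main obstacle — is interpretive rather than technical: the statement mixes a linear subspace of $\fg$ with a subgroup of $G$, so one must read ``controllable on $\operatorname{span}\{B_1,\ldots,B_m\}$'' as controllability on the subgroup $H$ generated by $\exp(\fa)$. When the span happens to be already closed under brackets this identification is literal; in general it is the natural translation from the Lie-algebra picture to the group picture, and is the one consistent with the rest of the section.
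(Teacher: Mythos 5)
Your argument is correct and is essentially the intended one: the paper states this corollary without any proof, and the natural justification is precisely your reduction --- identify $\DC=\ad(A)$ from the flow $\varphi_t(g)=e^{tA}ge^{-tA}$, upgrade $\ad(A)$-invariance of $\operatorname{span}\{B_1,\ldots,B_m\}$ to $\DC$-invariance of $\fa=\operatorname{Lie}\{B_1,\ldots,B_m\}$ via the Leibniz rule, and then apply the proposition and final theorem of Section~4. Your reading of ``controllable on the span'' as controllability on the subgroup $H$ whose Lie algebra is $\fa$ is also the only interpretation consistent with that theorem, so the interpretive caveat you raise is well placed rather than a gap.
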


\subsection{Semisimple Lie groups of dimension 3}
We only study linear system on special linear group $Sl(2,\R)$ because in others are the same accounts. Its Lie algebra have the following description
\[
	\mathfrak{sl}(2,\R)
	=\left\{
	\begin{pmatrix}
		x&y\\
		z&-x
	\end{pmatrix}:x,y,z\in\R
	\right\}.
\]
From above description, we have that the eigenvalues $\lambda_1$, $\lambda_2$ of an element $Z\in\fg$ satisfy the condition $\lambda_1+\lambda_2=0$.
	
Given a linear system $\Sigma_L$ over $Sl(2,\R)$, for a control function $u$, constant on an interval $[0,T]$, we denote
\[
	\Sigma_I=X+\displaystyle\sum_{j=1}^mu_j(t)Y_j, \mbox{\hspace{0.5cm}} 0\leq t\leq T, 
\]
the matrix of the right-invariant associated system. With these notations, by Theorem \ref{solsemsimpgroup}, the solution of $\Sigma_L$ is
\[
	\phi_t(u,e)=\exp\left(t\Sigma_I\right)\exp(-tX), \mbox{\hspace{0.5cm}} 0\leq t\leq T.
\]
We denote by $\lambda_1$, $\lambda_2$ the eigenvalues of $\Sigma_I$ and by $\delta_1$, $\delta_2$ the eigenvalues of $X$. Applying methods of functional calculus we have $\exp(t\Sigma_I)=p(\Sigma_I)$ and $\exp(tX)=q(X)$, where $p(z)=az+b$ and $q(z)=cz+d$ are polynomials satisfying the following conditions:
	\[
		\begin{array}{l}
			p(\lambda_i)=a\lambda_i+b=e^{t\lambda_i},\mbox{ for } i=1,2.\\
			q(\delta_j)=c\delta_j+d=e^{t\delta_j},\mbox{ for }j=1,2.
		\end{array}
	\]
	Using the fact that $\lambda_1+\lambda_2=\delta_1+\delta_2=0$ we obtain 
	\[
		p(z)=\dfrac{senh(t\lambda)}{\lambda}z+cosh(t\lambda)\,\,\,\, \mbox{and}\,\,\,\, q(z)=\dfrac{senh(t\delta)}{\delta}z+cosh(t\delta),
	\]
	where $\lambda$ and $\delta$ may be any of the correspondent eigenvalues. The solution of the linear system then is written as
	\[
		\phi_(u,e) = \left(\dfrac{senh(t\lambda)}{\lambda}\Sigma_I+cosh(t\lambda)Id\right)\left(\dfrac{senh(t\delta)}{\delta}X+cosh(t\delta)Id\right),
	\]
	for $0\leq t\leq T$.
	
	As mentioned above, on the other 3-dimensional, semisimple Lie groups $SU(2)$, $SO(3)$ and $SO(2,1)$, the argument to construct solutions of linear control systems are the same. Summarizing:  
	\begin{itemize}
		\item In $Sl(2,\R)$, \\
		$\phi_t(u,e) =\left(\dfrac{senh(t\lambda)}{\lambda}\Sigma_I+cosh(t\lambda)Id\right)\left(\dfrac{senh(t\delta)}{\delta}X+cosh(t\delta)Id\right)$;
		\item In $SU(2)$, \\
		$\phi_t(u,e) =\left(\dfrac{sen(t\lambda)}{\lambda}\Sigma_I+cos(t\lambda)Id\right)\left(\dfrac{sen(t\delta)}{\delta}X+cos(t\delta)Id\right)$;
		\item In $SO(3)$ or $SO(2,1)_0$,\\ 
		\begin{eqnarray*}
			\phi_t(u,e) = \left(\dfrac{cosh(t\lambda)-1}{\lambda^2}\Sigma_I^2 \right. &+ & \left.\dfrac{senh(t\lambda)}{\lambda}\Sigma_I+Id\right) \cdot \\
			& & \left(\dfrac{cosh(t\delta)-1}{\delta^2}X^2+\dfrac{senh(t\delta)}{\delta}X+Id\right).
		\end{eqnarray*}
	\end{itemize}


\end{document}